\def\hhmm{\number\hh:\ifnum\mm<10{}0\fi\number\mm}
\DeclareMathOperator{\trdeg}{trdeg}
\DeclareMathOperator{\tame}{tame}
\DeclareMathOperator{\nilp}{nilp}
\DeclareMathOperator{\codim}{codim}
\DeclareMathOperator{\Char}{char}
\DeclareMathOperator{\Gal}{Gal}
\DeclareMathOperator{\GL}{GL}
\DeclareMathOperator{\red}{red}
\DeclareMathOperator{\et}{\text{\'et}}
\newcommand{\Z}{\mathbb{Z}}
\newcommand{\N}{\mathbb{N}}
\newcommand{\C}{\mathbb{C}}
\renewcommand{\subset}{\subseteq}
\renewcommand{\supset}{\supseteq}
\renewcommand{\phi}{\varphi}
\declaretheoremstyle[spaceabove=15pt,spacebelow=15pt, qed=$\square$]{defstyle}
\declaretheoremstyle[
	spaceabove=15pt,
	spacebelow=15pt, 
	bodyfont=\itshape,
	qed=$\square$]{thmstyle}
\declaretheorem[style=thmstyle]{Theorem}
\declaretheorem[sibling=Theorem, style=thmstyle]{Proposition}
\declaretheorem[sibling=Theorem,  style=thmstyle]{Lemma}
\declaretheorem[sibling=Theorem, style=defstyle]{Definition}
\declaretheorem[sibling=Theorem, style=defstyle]{Remark}
\declaretheorem[sibling=Theorem, style=defstyle]{Example}
\title{Wild ramification of nilpotent coverings and coverings of bounded degree}
\address{Lars Kindler\\Freie Universit\"at Berlin\\ Mathematisches Institut\\ Arnimallee 3\\ 14195
Berlin, Germany}
\email{kindler@math.fu-berlin.de}
\urladdr{http://mi.fu-berlin.de/\textasciitilde kindler/}
\author{Lars Kindler}
\thanks{The author gratefully acknowledges support of the DFG (Deutsche Forschungsgemeinschaft) and Harvard University.}
\date{\today{ }\hhmm}
\begin{document}
\begin{abstract}
	A finite \'etale map between irreducible, normal varieties is called tame, if it is tamely ramified with respect to all partial compactifications whose boundary is the support of a strict normal crossings divisor. We prove that if the Galois group of a Galois covering contains a normal nilpotent subgroup of index bounded by a constant $N$, then the covering is tame if and only if it is tamely ramified with respect to a single distinguished partial compactification only depending on $N$. The main tools used in the proof are Temkin's local purely inseparable uniformization and a Lefschetz type theorem due to Drinfeld.
\end{abstract}
\maketitle
\section{Introduction}
Let $k$ be a field and let $X$ be a separated, finite type $k$-scheme.
Following \cite{Kerz/tameness}, a finite \'etale covering $f:Y\rightarrow X$ is
called \emph{tame} if for all $k$-morphisms $\phi:C\rightarrow X$, with $C$ a
regular $k$-curve, the induced covering $\phi_C:Y\times_X C\rightarrow C$ is
tamely ramified with respect to 
$\overline{C}\setminus C$, where $\overline{C}$ is the unique regular proper
curve with function field $k(C)$.  Thus to check whether a given covering
$Y\rightarrow X$ is tame, a priori, one has to test tameness along infinitely
many maps of curves to $X$.

%Here, we say that $(X,\overline{X})$ is a \emph{good partial compactification of $X$} (resp.~\emph{a good compactification}) if $\overline{X}$ is a regular (resp.~proper) $k$-scheme containing $X$ as an open dense
%subscheme, such that $\overline{X}\setminus X$ is the support of a strict normal
%crossings divisor.

If $X$ is regular and connected, it is known (\cite[Thm.~4.4]{Kerz/tameness}) that $f:Y\rightarrow X$ is
tame if and only if  $f$ is tamely ramified with respect to every geometric discrete valuation on $k(X)$. Equivalently, $f$ is tame if and only if it is tamely ramified with respect to every {good partial compactification} $(X,\overline{X})$ (\Cref{defn:gpc}), i.e., if the
normalization $\bar{f}:\overline{Y}\rightarrow \overline{X}$ of $\overline{X}$
in $Y$ is tamely ramified along the normal crossings divisor
$(\overline{X}\setminus X)_{\red}$. Again, to check whether a given covering is tame or not, one a priori has to check tameness with respect to infinitely many good partial compactifications.

However, if there exists a good compactification $(X,\overline{X}_0)$, that  is, a good partial compactification with  $\overline{X}_0$ \emph{proper},  then $f$ is tame if and only if $f$ is tamely ramified with respect to  this single good compactification.
Of course, as resolution of singularities is not known to be possible in positive
characteristic, it is not known whether every regular $k$-variety $X$ admits a
good compactification.

The goal of this short note is to prove that for a certain class of coverings, tameness can be detected by a single, privileged good \emph{partial} compactification.
%The goal of this short note is to prove that for a certain class of coverings of a smooth, quasi-projective variety over a finite field or over an algebraically closed field, there exists a single good \emph{partial} compactification, such
%that tameness of such coverings of $X$ can be detected by checking tameness with respect to
%this one privileged good partial compactification. 
\begin{Theorem}\label{thm:main}
	Assume that the field $k$ is either algebraically closed or finite.
	Let $X$ be a smooth, geometrically irreducible, quasi-projective $k$-scheme and
	$N\in \N$. There exists a good partial compactification
	$(X,\overline{X}_N)$ (\Cref{defn:gpc}), such that for any Galois \'etale covering $f:Y\rightarrow X$ with group $G$, such that $G$ has a normal \emph{nilpotent} subgroup of index $\leq N$,  the following are equivalent:
	\begin{enumerate}[label=\emph{(\alph*)}]
		\item $f$ is tame (\Cref{defn:tameness}, \ref{item:absoluteTameness}),
		\item $f$ is tamely ramified with respect to $(X,\overline{X}_N)$.
	\end{enumerate}
\end{Theorem}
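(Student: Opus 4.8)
The implication (a) $\Rightarrow$ (b) is formal, once $\overline{X}_N$ has been built as a genuine good partial compactification: by the characterisation of tameness recalled above (\cite[Thm.~4.4]{Kerz/tameness}), a tame covering is tamely ramified with respect to \emph{every} good partial compactification, hence in particular with respect to $(X,\overline{X}_N)$. The entire content therefore lies in the converse, and --- crucially --- in exhibiting one compactification $\overline{X}_N$ that works \emph{simultaneously} for all coverings whose group has a normal nilpotent subgroup of index $\leq N$. The difficulty is precisely that no proper good compactification is known to exist in positive characteristic, so one must make a \emph{partial} compactification suffice by exploiting the group-theoretic hypothesis.

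My plan is first to reduce the group theory. Given $f\colon Y\to X$ with group $G$ and normal nilpotent $H\trianglelefteq G$ of index $\leq N$, I factor $Y\to Z\to X$ with $Z=Y/H$, so that $Z\to X$ is Galois of degree $\leq N$ and $Y\to Z$ is Galois with nilpotent group $H$. Tameness of $Y\to X$ can be analysed along this tower, and since $H$ is nilpotent, $Y\to Z$ decomposes into a finite tower of \emph{central}, in particular abelian, steps. For an abelian step, tameness with respect to a geometric valuation $v$ is governed by whether a single defining cohomology class (Artin--Schreier--Witt in the residue characteristic, Kummer away from it) extends across the corresponding divisor. This is the feature that makes a single compactification conceivable: the places where such classes can fail to extend are, for the restricted class of coverings, controlled uniformly.

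To build $\overline{X}_N$ itself I would start from an arbitrary projective compactification and invoke Temkin's local purely inseparable uniformization to produce a good partial compactification with strict normal crossings boundary; because tameness is insensitive to purely inseparable alterations (Frobenius pullback does not change the relevant quotient of the fundamental group), the inseparable modifications this introduces are harmless. The same uniformization is what lets me deal with an \emph{arbitrary} geometric valuation $v$ of $k(X)$: I can dominate $v$ by an snc model and compare the ramification of $f$ along the centre of $v$ with its ramification along the distinguished boundary divisors of $\overline{X}_N$. The role of the index bound $N$ is to make this comparison uniform, so that $\overline{X}_N$ depends only on $N$ and not on the individual covering.

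The last ingredient, and the mechanism that converts ``tame along $\overline{X}_N$'' into ``tame'', is Drinfeld's Lefschetz type theorem: it reduces the detection of tameness to the restriction of $f$ to a sufficiently ample family of curves $C\subset X$ meeting the boundary transversally. For such a good curve, $\overline{C}\setminus C$ is cut out on the smooth compactification $\overline{C}$ by the boundary of $\overline{X}_N$, so tameness of $f$ with respect to $\overline{X}_N$ forces tame ramification of each restricted covering over $\overline{C}$, and Drinfeld's theorem upgrades this to tameness of $f$. I expect the main obstacle to be exactly the uniformity: proving that a \emph{single} snc partial compactification, together with a single associated family of good curves, captures all potential wild ramification of every nilpotent-by-$(\leq N)$ covering at once. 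This is where the abelian reduction (bounding where the defining classes can be wild), Temkin's uniformization (surveying all geometric valuations without resolution of singularities), and Drinfeld's theorem (localising tameness onto curves) must be combined, and it is the heart of the argument.
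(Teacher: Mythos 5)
Your proposal assembles the right ingredients (the factorisation $Y\to Y/H\to X$, Temkin's uniformization, Drinfeld's Lefschetz theorem), but the two mechanisms that actually produce a \emph{single} compactification are missing, and one of them is replaced by an implication that does not hold. First, for the nilpotent part you propose to decompose $Y\to Z$ into central abelian steps and to track Artin--Schreier--Witt/Kummer classes, asserting that the loci where these classes fail to extend are ``controlled uniformly'' --- but you give no mechanism for this uniformity, and it is exactly the point at issue. The paper's argument is different and concrete: a nilpotent group splits as $P\times P'$ with $P$ a $p$-group, and on each of the finitely many \emph{regular} Temkin charts $V_i$ the Zariski--Nagata purity theorem forces any ramification of the $p$-part to occur along codimension~$1$ points of $V_i\setminus U_i$; since a ramified $p$-power-degree extension of discrete valuation rings is automatically wild, the finitely many boundary divisors of the finitely many charts $V_i$ (transported to $k(X)$ via \Cref{lem:tamevsPI}, using that $k(V_i)/k(X)$ is purely inseparable) give a finite set of valuations detecting wildness of \emph{every} nilpotent covering. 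Without purity (or a substitute), your abelian-tower reduction does not bound where wild ramification can hide.

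Second, your use of Drinfeld's theorem is not valid as stated: knowing that the restriction of $f$ to a well-chosen curve $C$ is tamely ramified over $\overline{C}$ does \emph{not} imply that $f$ is tame --- $f$ could be wild at a geometric valuation whose center $C$ never meets. What \cite[Prop.~6.2]{EsnaultKindler/Lefschetz} actually gives is a surjection $\pi_1^{\tame}(C\cap X,\bar c)\twoheadrightarrow\pi_1^{\tame}((X,X'\setminus\Sigma),\bar c)$, whence topological finite generation of the target and therefore only \emph{finitely many} open subgroups of index $\leq N$. The paper uses this finiteness twice: once to list the finitely many degree~$\leq N$ coverings that are tame along a preliminary partial compactification but not tame, so that their witnessing wild valuations can be blown up into finitely many additional boundary divisors; and once to ensure there are only finitely many possibilities for the quotient covering $Y/H\to X$, so that \Cref{prop:nilpotent} need only be applied to finitely many schemes $Y'_i$ and contributes finitely many further valuations $v_{ij}$. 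Your proposal acknowledges that uniformity is ``the heart of the argument'' but does not supply either of these finiteness steps, so the construction of $\overline{X}_N$ is not actually carried out.
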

The proof of the theorem uses
Temkin's inseparable local uniformization (\cite{Temkin}), following an idea of
Kerz-Schmidt, together with a generalization of a Lefschetz theorem due to Drinfeld (\cite[Prop.~6.2]{EsnaultKindler/Lefschetz}).\\

\begin{Example}
One particularly interesting class of Galois coverings to which \Cref{thm:main} applies arises as follows. Fix $r\in \N$. According to Jordan's theorem (\cite[Thm.~0.1]{LarsenPink}) there exists a constant $N(r)$, only depending on $r$, such that any finite subgroup of $\GL_r(\C)$ contains a normal abelian subgroup of index $\leq N(r)$. Thus, the tameness of representations $\rho:\pi_1(X,\bar{x})\rightarrow \GL_r(\C)$ with finite image can be detected on the good partial compactification $(X,\overline{X}_{N(r)})$.
\end{Example}

The structure of the article is as follows. In \Cref{sec:valuations} we gather some facts about discrete valuations and good partial compactifications, and in \Cref{sec:tameness} we present the necessary background from ramification theory. The proof of \Cref{thm:main} for nilpotent coverings is carried out in \Cref{sec:nilpotent}, and the general case is completed in \Cref{sec:proof}.

%Our goal is to prove that
%there are finitely many geometric discrete rank $1$ valuations $v_1,\ldots,
%v_n$ on $k(X)$, such
%that a finite Galois \'etale morphism $f:Y\rightarrow X$ with \emph{nilpotent
%Galois group} is tame if and only if the
%extension $k(X)\subset k(Y)$ is tamely ramified with respect to $v_1,\ldots,
%v_n$ (\Cref{thm:finiteness}). The argument follows an idea due to Moritz Kerz and Alexander Schmidt. 
%
\section{Geometric discrete valuations}
\label{sec:valuations}

In this section, $k$ is an arbitrary field and $X$ denotes a normal, irreducible,
separated $k$-scheme of finite type.
\begin{Definition} Let $k(X)$ denote the function field of $X$.
	\begin{enumerate}
		\item By a \emph{discrete valuation} on $k(X)$, we mean a discrete valuation of rank $1$, i.e.~with
			value group isomorphic to $\Z$.
		\item Let $v$ be a discrete valuation on $k(X)$. We write $\mathcal{O}_v\subset
			k(X)$ for its valuation ring, $\mathfrak{m}_v$ for the
			maximal ideal of $\mathcal{O}_v$ and $k(v)$ for its residue field.
		\item A \emph{model of $k(X)$} is separated, integral, finite type
			$k$-scheme $X'$ with $k(X')=k(X)$.
		\item If $X'$ is a model of $k(X)$ and $v$ a discrete valuation on $k(X)$, then a point $x\in
			X'$ is called \emph{center of $v$}, if
			$\mathcal{O}_{X',x}\subset \mathcal{O}_v\subset k(X)$,
			and $\mathfrak{m}_v\cap
			\mathcal{O}_{X',x}=\mathfrak{m}_x$. 
		\item $v$ is called \emph{geometric} or \emph{divisorial} or \emph{of the first
			kind} if there
			exists a model $X'$ of $k(X)$ such that $v$ has a
			center  which is a codimension $1$ point.	
	\end{enumerate}
\end{Definition}
\begin{Remark}Recall that if
			$X'$ is separated over $k$, then $v$ has at most one
			center on $X'$, and if $X'$ is proper, then $v$ has
			precisely one center on $X'$.
	\end{Remark}

The question of whether a discrete valuation is geometric or not is encoded in
the \emph{Abhyankar inequality}:

\begin{Proposition}[{\cite[Ch.~8,
	Thm.~3.26]{Liu/2002}}]\label{prop:abhyankar-inequality}
	If $v$ is a discrete valuation on $k(X)$ with center $x$ on $X$, then we have the inequality
	\begin{equation}\label{abhyankar-inequality}\trdeg_{k(x)}k(v)\leq \dim
		\mathcal{O}_{X,x} -1.
	\end{equation}
	The discrete valuation $v$ is geometric if and only if equality holds
	in \eqref{abhyankar-inequality}.
\end{Proposition}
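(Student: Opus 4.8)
The plan is to reduce both assertions to the global form of Abhyankar's inequality over $k$ by means of the dimension formula for integral schemes of finite type over a field, which here reads $\dim\mathcal{O}_{X,x}=\trdeg_k k(X)-\trdeg_k k(x)$. First I would observe that the hypotheses $\mathcal{O}_{X,x}\subset\mathcal{O}_v$ and $\mathfrak{m}_v\cap\mathcal{O}_{X,x}=\mathfrak{m}_x$ make the map $\mathcal{O}_{X,x}\to k(v)$ factor through an injection $k(x)\hookrightarrow k(v)$, so that transcendence degrees add along $k\subset k(x)\subset k(v)$. Substituting $\trdeg_{k(x)}k(v)=\trdeg_k k(v)-\trdeg_k k(x)$ together with the dimension formula into the claim, the inequality \eqref{abhyankar-inequality} becomes equivalent to $\trdeg_k k(v)\leq \trdeg_k k(X)-1$, and equality in \eqref{abhyankar-inequality} becomes equivalent to $\trdeg_k k(v)=\trdeg_k k(X)-1$.

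To prove the inequality in this global form, set $n=\trdeg_k k(X)$ and argue by contradiction, assuming $\trdeg_k k(v)=n$. I would pick $y_1,\dots,y_n\in\mathcal{O}_v$ whose residues form a transcendence basis of $k(v)/k$; a nontrivial polynomial relation among the $y_i$ over $k$ would reduce modulo $\mathfrak{m}_v$ to one among the residues, so the $y_i$ are algebraically independent and hence a transcendence basis of $k(X)/k$, with each $y_i$ a $v$-unit. For nonzero $P\in k[y_1,\dots,y_n]$ the residue $P(\bar y)$ is then nonzero, forcing $v(P)=0$, so $v$ is trivial on $k(y_1,\dots,y_n)$. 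Since $k(X)$ is finite over this subfield, the value group of $v$ would be torsion over the trivial group, contradicting $\Gamma_v\cong\Z$; hence $\trdeg_k k(v)\leq n-1$, which is the inequality.

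For the equivalence, the direction ``$v$ geometric $\Rightarrow$ equality'' is immediate: if $v$ has a codimension one center $x'$ on some model $X'$, applying the inequality on $X'$ gives $\trdeg_{k(x')}k(v)\leq 0$, so $k(v)/k(x')$ is algebraic, and the dimension formula on $X'$ yields $\trdeg_k k(v)=\trdeg_k k(x')=\dim X'-1=n-1$. The main obstacle is the converse. Assuming $\trdeg_k k(v)=n-1$, I would lift a transcendence basis of $k(v)/k$ to $t_1,\dots,t_{n-1}\in\mathcal{O}_v$ and adjoin a uniformizer $\pi$ with $v(\pi)=1$; the same value comparison as above shows $t_1,\dots,t_{n-1},\pi$ are algebraically independent over $k$, hence a transcendence basis of $k(X)/k$. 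I would then take $B$ to be the integral closure of $k[t_1,\dots,t_{n-1},\pi]$ in $k(X)$: it is contained in $\mathcal{O}_v$ because a valuation ring is integrally closed, and it is finite over $k[t_1,\dots,t_{n-1},\pi]$, hence a genuine model $X'=\Spec B$ of $k(X)$ with $B\subset\mathcal{O}_v$. At the center $\mathfrak{p}=\mathfrak{m}_v\cap B$ one has $\bar\pi=0$ while the $\bar t_i$ stay algebraically independent, so $\dim B/\mathfrak{p}=n-1$ (using the embedding $B/\mathfrak{p}\hookrightarrow k(v)$ for the reverse bound) and therefore $\operatorname{ht}\mathfrak{p}=1$ by the dimension formula for the finitely generated domain $B$; thus $\mathfrak{p}$ is a codimension one center and $v$ is geometric. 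I expect the delicate points to be the finiteness of this normalization, needed to know that $B$ is of finite type, and the two algebraic-independence verifications, both of which rest on the observation that an element of $\mathcal{O}_v$ with nonzero residue is a unit.
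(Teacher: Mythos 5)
Your proof is correct. The paper offers no argument of its own here---it simply imports the statement from Liu, Ch.~8, Thm.~3.26---and what you have written is essentially the standard argument found there: the reduction via the dimension formula $\dim\mathcal{O}_{X,x}+\trdeg_k k(x)=\trdeg_k k(X)$, the algebraic-independence/value-group-torsion argument for the inequality, and for the converse the construction of a model as the (finite) normalization of $k[t_1,\dots,t_{n-1},\pi]$ on which the center has height one. The two points you flag as delicate (finiteness of normalization of a finitely generated domain in a finite extension, and the unit criterion ``nonzero residue $\Rightarrow$ $v$-unit'' driving both independence checks) are exactly the right ones, and both are standard.
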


From this one derives the following fact.
\begin{Proposition}[{\cite[Ch.~8,
	Ex.~3.16]{Liu/2002}}]\label{prop:dominant-geometric}
	If $f:Y\rightarrow X$ is a dominant morphism of normal, irreducible, separated,
	finite type $k$-schemes, and if $v$ is a geometric discrete valuation
	of $k(Y)$, then either $\mathcal{O}_v\supset k(X)$, or $w:=v|_{k(X)}$ is a geometric discrete valuation of
	$k(X)$ with finitely generated residue extension $k(w)/k(x)$.
\end{Proposition}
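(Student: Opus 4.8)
The plan is to reduce the statement to transcendence-degree bookkeeping via the Abhyankar inequality (\Cref{prop:abhyankar-inequality}). Since $f$ is dominant, it induces an injection of function fields $k(X)\hookrightarrow k(Y)$, and $k(Y)$ is finitely generated over $k(X)$ because it is already finitely generated over $k$; write $L=k(X)$, $K=k(Y)$ and $d=\trdeg_L K<\infty$. Restricting $v$ to $L$ yields a valuation $w:=v|_L$ whose value group $\Gamma_w=v(L^\times)$ is a subgroup of $\Gamma_v\cong\Z$. If $\Gamma_w=0$, then every nonzero element of $L$ is a $v$-unit, i.e.\ $L\subset\mathcal{O}_v$, which is the first alternative. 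Otherwise $\Gamma_w$ is a nonzero subgroup of $\Z$, hence again $\cong\Z$, so $w$ is a discrete valuation of rank $1$ and the index $e=[\Gamma_v:\Gamma_w]$ is finite.

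It remains to prove $w$ is geometric. First I would recast \Cref{prop:abhyankar-inequality} in absolute form: for a rank-$1$ discrete valuation $u$ on the function field of a normal variety $V/k$, passing to a proper normal model so that a center $z$ exists and combining $\trdeg_{k(z)}k(u)\leq\dim\mathcal{O}_{V,z}-1$ with $\trdeg_k k(z)=\dim V-\dim\mathcal{O}_{V,z}$ gives $\trdeg_k k(u)\leq\trdeg_k k(V)-1$, with equality exactly when $u$ is geometric. Applied to $v$ on $K$, the hypothesis that $v$ is geometric reads $\trdeg_k k(v)=\trdeg_k K-1$.

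The crux is the relative Abhyankar inequality for the extension $K/L$ and the extension of valuations $v/w$, namely $\trdeg_{k(w)}k(v)+\dim_\Q\big((\Gamma_v/\Gamma_w)\otimes\Q\big)\leq\trdeg_L K=d$. Since $\Gamma_v/\Gamma_w$ is finite (of order $e$), the value-group term vanishes, leaving $\trdeg_{k(w)}k(v)\leq d$. Combining this with the tower identities $\trdeg_k K=d+\trdeg_k L$ and $\trdeg_k k(v)=\trdeg_{k(w)}k(v)+\trdeg_k k(w)$ gives
\[ \trdeg_k k(w)=\trdeg_k k(v)-\trdeg_{k(w)}k(v)\geq(\trdeg_k K-1)-d=\trdeg_k L-1. \]
As the reverse inequality is the absolute Abhyankar inequality on $L$, equality holds and $w$ is geometric. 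I expect this bookkeeping — in particular verifying that the value-group contribution is torsion and hence negligible — to be the only real point; everything else is formal.

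Finally, for the residue extension: a geometric valuation has residue field equal to the function field of its codimension-one center on a suitable normal model, hence finitely generated over $k$. Thus $k(v)$ and $k(w)$ are finitely generated field extensions of $k$, and since a finitely generated extension remains finitely generated over any intermediate field, the residue extension $k(v)/k(w)$ (equivalently $k(w)/k(x)$ for a center $x$ of $w$) is finitely generated, as required.
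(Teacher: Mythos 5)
Your argument is correct and follows exactly the route the paper indicates: the paper gives no written proof of this proposition, only the remark that it ``is derived from'' the Abhyankar inequality together with the citation to Liu, Ch.~8, Ex.~3.16, and your transcendence-degree bookkeeping is precisely that derivation carried out in full (trivial restriction gives the first alternative; otherwise $\Gamma_w\cong\Z$, the relative inequality forces $\trdeg_k k(w)=\trdeg_k k(X)-1$, and equality in \Cref{prop:abhyankar-inequality} on a proper normal model makes $w$ geometric). The only input beyond \Cref{prop:abhyankar-inequality} is the relative Abhyankar inequality for the valued-field extension $(K,v)/(L,w)$ itself, a standard fact that is fair to invoke since $K/L$ is finitely generated, and your closing observation that all residue fields in sight are finitely generated over $k$ correctly disposes of the residue-extension claim (where the paper's $k(w)/k(x)$ is evidently a typo for $k(v)/k(w)$).
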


One can translate from the language of geometric discrete valuations into the
language of good partial compactifications.

\begin{Definition}\label{defn:gpc}
	Let $k$ be an arbitrary field  and let $X$ be a regular, irreducible, separated $k$-scheme of finite type.
Consider  a regular, separated $k$-scheme $\overline{X}$ of finite type,
			together with a dominant open immersion
			$X\hookrightarrow \overline{X}$, such that the complement
			$\overline{X}\setminus X$ is the support of a strict normal crossings
			divisor (\cite[\S 1.8]{Grothendieck/Tame}). The pair $(X,\overline{X})$ is said to be a
			\emph{good partial compactification of $X$}. If
			$\overline{X}$ is proper, then
			$(X,\overline{X})$ is said to be a \emph{good compactification} of $X$.
%		\item Write $\GPC(X)$ for the set of good partial
%			compactifications of $X$.
%		\item If  $(X,\overline{X}_1),
%			(X,\overline{X}_2)\in \GPC(X)$, then we write
%			$(X,\overline{X}_1)\leq (X,\overline{X}_2)$ if every
%			geometric discrete valuation which has a codimension $1$ center on
%			$\overline{X}_1$ also has a codimension $1$ center on $\overline{X}_2$.
%		\item If $(X,\overline{X}_1)\leq (X,\overline{X}_2)$ and
%			$(X,\overline{X}_2)\leq (X,\overline{X}_1)$, then we
%			write $(X,\overline{X}_1)\sim (X,\overline{X}_2)$ and
%			say that the two good partial compactifications of $X$
%			are equivalent.  
%	\end{enumerate}
\end{Definition}
Given a proper normal model $X'_0$ of $k(X)$ and a geometric discrete valuation $v$ on $k(X)$, for $n>0$  let $X'_n$ be the blow-up of $X'_{n-1}$ in the center of $v$. According \cite[Ch.~8, Ex.~3.14]{Liu/2002},  this process terminates and one obtains a model $X''$ on which $v$ has a codimension $1$ center. This implies the following proposition.
\begin{Proposition}\label{prop:blowup}
	Let $X$ and $k$ be as in \Cref{defn:gpc}. 
	%\begin{enumerate}[label=\emph{(\alph*)}]
		%\item Equivalence of good partial compactifications is an equivalence relation on $\GPC(X)$ and $(\GPC(X)/\!\!\sim, \leq)$ is a directed set.
		If $v=\{v_1\ldots, v_n\}$ is a set of
			geometric discrete valuations on $k(X)$ which do not have a center on $X$, then
			there exists a good partial compactification
			$(X,\overline{X}_{{v}})$ of $X$, such that the codimension $1$
			points on $\overline{X}_{{v}}\setminus X$ precisely correspond to
			$\{v_1,\ldots, v_n\}$.
			%, and every good partial
%			compactification $(X,\overline{X}')$ with this property
%			is equivalent to $(X,\overline{X}_v)$.
%			
%			This construction produces an isomorphism of 
%			directed sets \[(\GPC(X)/\!\!\sim,\leq)\cong
%			\left(\left\{\begin{tabular}{c}finite sets of\\  geometric discrete
%					valuations on $k(X)$\\ without center
%					on $X$\end{tabular}\right\},\subset\right).\]
%	\end{enumerate}
\end{Proposition}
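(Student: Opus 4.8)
The plan is to realize the $v_i$ as the prime divisors at infinity of a suitable proper model and then to restrict to the open locus where that model is regular with a strict normal crossings boundary. First I would produce a proper normal model on which all the $v_i$ become divisorial. By Nagata's compactification theorem, $X$ embeds as a dense open subscheme of a proper $k$-scheme; normalizing in $k(X)$ yields a proper normal model $X'_0$ of $k(X)$ into which $X$ still embeds as a dense open, because $X$ is already normal. Since each $v_i$ is geometric and $X'_0$ is proper, $v_i$ has a unique center $x_i$ on $X'_0$; as $v_i$ has no center on $X$, necessarily $x_i\in X'_0\setminus X$. Applying the blow-up procedure recalled before the statement (\cite[Ch.~8, Ex.~3.14]{Liu/2002}) to the centers of $v_1,\ldots,v_n$ in turn, and normalizing, I obtain a proper normal model $X''$ of $k(X)$ on which every $v_i$ has a codimension $1$ center $\eta_i$. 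Blowing up the (codimension $\geq 2$) center of some $v_j$ leaves the generic points of the divisors already produced untouched, so the recursion does not destroy previously achieved divisorial centers; and each blow-up is an isomorphism over $X$, so $X\hookrightarrow X''$ remains a dense open immersion.

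Let $D_i=\overline{\{\eta_i\}}$ be the corresponding prime divisor and $Z=X''\setminus X$ the closed boundary. The first key observation is that $D_i\cap X=\emptyset$: a nonempty open subset of the irreducible scheme $D_i$ must contain its generic point $\eta_i$, while $\eta_i\notin X$ because $v_i$ has no center on $X$; hence $D_i\subseteq Z$. Next I would introduce the open subscheme $V\subseteq X''$ consisting of the points at which $X''$ is regular and the reduced divisor $\sum_i D_i$ has strict normal crossings; both conditions are open, so $V$ is open. I claim $X\subseteq V$ and $\eta_i\in V$ for all $i$. The inclusion $X\subseteq V$ holds because $X$ is regular and $\sum_i D_i$ restricts to the empty divisor on $X$ by the previous observation. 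For $\eta_i$: since $X''$ is normal, $\mathcal{O}_{X'',\eta_i}$ is a one-dimensional normal local ring, hence a DVR, so $X''$ is regular at $\eta_i$; moreover only the single regular divisor $D_i$ passes through $\eta_i$, so $\sum_i D_i$ has strict normal crossings there. This also identifies $\mathcal{O}_{v_i}$ with $\mathcal{O}_{X'',\eta_i}$ (a DVR dominating the DVR given by $\eta_i$ inside $k(X)$), so that $\eta_i$ genuinely corresponds to $v_i$.

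Finally, to remove the unwanted boundary I would set $C=Z\setminus\bigcup_i D_i$ and define $\overline{X}_{v}=V\setminus\overline{C}$, an open subscheme of $X''$. Since $C\subseteq Z$ is disjoint from $X$, its closure $\overline{C}$ is still disjoint from $X$; and $\eta_i\notin\overline{C}$, because the irreducible components of $\overline{C}$ are the boundary components of $Z$ distinct from the $D_i$ together with the components of codimension $\geq 2$, none of which contains $\eta_i$. Hence $X\subseteq\overline{X}_{v}$ and $\eta_i\in\overline{X}_{v}$, and $(X,\overline{X}_{v})$ is a good partial compactification: $\overline{X}_{v}$ is regular, separated and of finite type over $k$, the map $X\hookrightarrow\overline{X}_{v}$ is a dominant open immersion, and the boundary $\overline{X}_{v}\setminus X=\overline{X}_{v}\cap Z=\overline{X}_{v}\cap\bigcup_i D_i$ is the support of a strict normal crossings divisor by the definition of $V$. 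Its codimension $1$ points are exactly the surviving $\eta_i$, i.e.\ all of them, since every other codimension $1$ point of $Z$ lies in $\overline{C}$ and has been excised; this gives the required $\overline{X}_{v}$.

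The step I expect to require the most care is the passage to $V$ and the excision of $\overline{C}$: one must verify simultaneously that discarding the non-regular locus, the non-crossing locus, and the superfluous boundary removes neither a point of $X$ nor any of the generic points $\eta_i$. This is precisely what the normality of $X''$ (forcing regularity at the codimension $1$ points $\eta_i$) and the disjointness $D_i\cap X=\emptyset$ guarantee, so the only genuinely delicate point is assembling these openness and closedness facts consistently rather than any hard geometry.
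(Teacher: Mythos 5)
Your argument is correct and follows the same route the paper intends: the paper deduces the proposition directly from the blow-up procedure of \cite[Ch.~8, Ex.~3.14]{Liu/2002} recalled just before the statement, leaving the passage to a regular open subset with strict normal crossings boundary implicit. Your write-up supplies exactly those omitted details (iterating the blow-ups for the several $v_i$, checking that later blow-ups disturb neither $X$ nor the previously obtained codimension~$1$ centers, and excising the non-regular locus together with the superfluous boundary components).
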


\section{Tame ramification}
\label{sec:tameness}
We continue to denote by $k$ a field.
\begin{Definition}\label{defn:tameness}
	Let $X$ and  $Y$ be  normal, irreducible, separated, finite type $k$-schemes
	and $f:Y\rightarrow X$ a finite \'etale $k$-morphism.
	\begin{enumerate}
		\item If $v$ is a discrete valuation on $k(X)$, there exist
			finitely many discrete valuations $w_1,\ldots, w_r$ on
			$k(Y)$ such that $\mathcal{O}_{w_i}\cap
			k(X)=\mathcal{O}_v$. Then $f$ is
			\emph{tamely ramified with respect to $v$} if the
			residue extensions $k(w_i)/k(v)$ are separable and if
			the ramification indices on $\mathcal{O}_v\subset
			\mathcal{O}_{w_i}$ are prime to $\Char(k)$.

			Note that if $v$ has center on $X$, then $f$ is tame
			with respect to $v$, as $f$ is \'etale.
		\item If $(X,\overline{X})$ is a good partial compactification
			of $X$ (\Cref{defn:gpc}), then $f$ is said to be \emph{tamely ramified with respect to
			$(X,\overline{X})$} if $f$ is tamely ramified with
			respect to all discrete valuations on $k(X)$ which are
			centered in codimension $1$ points of 
			$\overline{X}$. Or, equivalently, if $f$ is tamely ramified with respect to the strict normal crossings divisor $(\overline{X}\setminus X)_{\red}$ (\cite[\S 2]{Grothendieck/Tame}). 

			If $\bar{x}$ is a geometric point of $X$, we write
			$\pi_1^{\tame}( (X,\overline{X}),\bar{x})$ for the
			quotient of $\pi^{\et}_1(X,\bar{x})$ corresponding to the
			category of finite \'etale coverings which are tamely
			ramified with respect to $(X,\overline{X})$. 
			Concretely, $\pi_1^{\tame}( (X,\overline{X}), \bar{x})$
			is the quotient of $\pi_1^{\et}(X,\bar{x})$ by the
			smallest closed normal subgroup containing all wild
			inertia groups attached to points of the normalization of $\overline{X}$ in $Y$ lying over the codimension $1$ points of
			$\overline{X}\setminus X$.
		\item\label{item:absoluteTameness} The morphism $f$ is called \emph{tame} if for every
			regular $k$-curve $C$ and for every $k$-morphism
			$\phi:C\rightarrow X$, the induced \'etale covering
			\[f_C:Y\times_X C \rightarrow C\] is tamely ramified with
			respect to the good compactification $(C,\overline{C})$,
			where $\overline{C}$ is the unique normal proper curve
			with function field $k(C)$.

			If $\bar{x}$ is a geometric point of $X$, we denote by
			$\pi_1^{\tame}(X,\bar{x})$ the quotient of
			$\pi_1(X,\bar{x})$ corresponding to the category of
			tame \'etale coverings of $X$ (\cite[Section 7]{Kerz/tameness}).
		\item As is customary, we will often use the word ``wild'' in place of 
			``not tame''.
	\end{enumerate}
\end{Definition}
\begin{Remark}\label{rem:kerzschmidt}
	In \cite[Rem.~4.5]{Kerz/tameness} it is proved that if an \'etale covering $f$ as above is
	tame, then $f$ is tamely ramified with respect to all geometric
	discrete valuations $v$ on $k(X)$, or equivalently with respect to all
	good partial compactifications of $X$. Moreover, if  $X$ is regular, then 
	\cite[Thm.~4.4]{Kerz/tameness} shows that the converse is also true.

	In particular, if $\bar{x}$ is a geometric point of $X$ and if $(X,\overline{X})$ is a good partial compactification of $X$, then there is a canonical quotient map $\pi_1^{\tame}( (X,\overline{X}),x)\twoheadrightarrow \pi_1^{\tame}(X,\bar{x})$, which is an isomorphism if $\overline{X}$ is proper.
\end{Remark}

In the proof of \Cref{thm:main} we will use the following two lemmas  for which we
could not find a reference.

\begin{Lemma}\label{lem:SES}Let $k$ be a perfect field and  fix an algebraic closure $\bar{k}$ of $k$. Let $X$ be a geometrically irreducible, regular, separated, finite type $k$-scheme. If $\bar{x}$ is a geometric point of $X\times_k \bar{k}$, then there is a short exact sequence
	\begin{equation*}
		\begin{tikzcd}
			1\rar&\pi_1^{\tame}(X\times_k \bar{k},\bar{x})\rar&\pi_1^{\tame}(X,\bar{x})\rar&\Gal(\bar{k}/k)\rar&1
		\end{tikzcd}
	\end{equation*}
\end{Lemma}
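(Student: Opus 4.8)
The plan is to descend the classical homotopy exact sequence of \'etale fundamental groups to the tame quotients. Write $X_{\bar k}:=X\times_k \bar k$, and regard $\bar x$ as a geometric point of $X$ via the projection. Since $X$ is geometrically irreducible it is geometrically connected, so the classical homotopy sequence (SGA~1, Exp.~IX) gives an exact sequence
$1\to \pi_1^{\et}(X_{\bar k},\bar x)\to \pi_1^{\et}(X,\bar x)\xrightarrow{\,p\,}\Gal(\bar k/k)\to 1$.
Denote by $W\trianglelefteq\pi_1^{\et}(X,\bar x)$ and $\bar W\trianglelefteq\pi_1^{\et}(X_{\bar k},\bar x)$ the kernels of the canonical surjections onto $\pi_1^{\tame}(X,\bar x)$ and $\pi_1^{\tame}(X_{\bar k},\bar x)$ respectively; by \Cref{defn:tameness} these are the closed normal subgroups generated by the wild inertia groups. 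The whole statement then reduces to two claims: that $p$ factors through the tame quotient, and that $W=\bar W$ as subgroups of $\pi_1^{\et}(X_{\bar k},\bar x)$.

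First I would dispose of surjectivity. For a finite separable (hence, as $k$ is perfect, arbitrary finite) extension $k'/k$, the covering $X\times_k k'\to X$ is tame: after pullback along any regular $k$-curve $\phi:C\to X$ it becomes the restriction over $C$ of $\overline{C}\times_k k'\to\overline{C}$, which is finite \'etale over all of $\overline{C}$ (being the base change of the \'etale morphism $\Spec k'\to\Spec k$), hence unramified and a fortiori tamely ramified with respect to $\overline C\setminus C$. Since such coverings correspond to the quotients $\pi_1^{\et}(X,\bar x)\to\Gal(k'/k)$, their tameness forces $W\subseteq\ker(\pi_1^{\et}(X,\bar x)\to\Gal(k'/k))$ for every $k'$, and intersecting gives $W\subseteq\ker p=\pi_1^{\et}(X_{\bar k},\bar x)$. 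In particular $p$ induces a surjection $q:\pi_1^{\tame}(X,\bar x)\twoheadrightarrow\Gal(\bar k/k)$ whose kernel is $\pi_1^{\et}(X_{\bar k},\bar x)/W$.

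The core of the proof is then the base-change invariance of tameness: a finite \'etale $f:Y\to X$ is tame over $k$ if and only if $f_{\bar k}:Y\times_X X_{\bar k}\to X_{\bar k}$ is tame over $\bar k$. For the forward implication I would take a $\bar k$-curve $\bar C\to X_{\bar k}$, spread it out over a finite subextension to reduce to a $k$-curve, and use that tame ramification of coverings of curves is preserved under the separable constant field extension $\bar k/k$ (where the boundary DVRs extend unramifiedly with separable residue growth); this yields a functor $\Cov^{\tame}(X)\to\Cov^{\tame}(X_{\bar k})$ compatible with $p$, and the induced commutative square of fundamental groups gives $\bar W\subseteq W$. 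For the converse I would base-change a $k$-curve $C\to X$ to $C\times_k\bar k\to X_{\bar k}$, invoke tameness upstairs via the compatibilities $\overline{C}\times_k\bar k=\overline{C\times_k\bar k}$ and $(Y\times_X C)\times_k\bar k=(Y\times_X X_{\bar k})\times_{X_{\bar k}}(C\times_k\bar k)$ (normality surviving since $k$ is perfect), and then \emph{descend} tameness along $\bar k/k$: as $\bar k/k$ is separable, the constant extension of each boundary DVR is unramified, so it neither alters ramification indices modulo $\Char(k)$ nor destroys separability of residue extensions, whence wildness upstairs is equivalent to wildness downstairs. Since tameness over $\bar k$ is visibly $\Gal(\bar k/k)$-equivariant, $\bar W$ is stable under the $\Gal(\bar k/k)$-action and hence normal in $\pi_1^{\et}(X,\bar x)$; every open normal $N\supseteq\bar W$ then corresponds to a covering of $X$ that becomes tame over $\bar k$, hence is tame over $k$ by descent, hence $N\supseteq W$, and intersecting over all such $N$ gives $W\subseteq\bar W$.

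Combining the two inclusions yields $W=\bar W$, so the map $\pi_1^{\tame}(X_{\bar k},\bar x)=\pi_1^{\et}(X_{\bar k},\bar x)/\bar W\to\pi_1^{\et}(X,\bar x)/W=\pi_1^{\tame}(X,\bar x)$ is injective with image $\ker q=\pi_1^{\et}(X_{\bar k},\bar x)/W$, and the desired short exact sequence follows. I expect the descent implication ($\bar k$-tame $\Rightarrow$ $k$-tame) to be the main obstacle: it requires the careful ramification-theoretic check that the algebraic \emph{separable} extension $\bar k/k$ creates no new wild ramification of curve coverings and removes none (relying on \Cref{prop:dominant-geometric} to compare geometric valuations of $k(X)$ and $\bar k(X_{\bar k})$), together with the $\Gal(\bar k/k)$-equivariance needed to promote $\bar W$ to a normal subgroup of the full group $\pi_1^{\et}(X,\bar x)$.
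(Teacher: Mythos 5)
Your proposal is correct and follows essentially the same route as the paper: both reduce to the classical homotopy exact sequence of SGA~1 and verify that tameness of a curve covering at a boundary point is unaffected by the separable constant-field extension $\bar k/k$, checked on the boundary discrete valuation rings (the paper phrases this via strict henselizations, you via unramifiedness of the constant extension with separable residue growth). The only difference is presentational: the paper delegates the group-theoretic bookkeeping to an inspection of SGA~1, XI.6.1 and isolates the single descent statement that a tame covering of $X_{\bar k}$ comes from a tame covering of some $X_{k'}$, whereas you carry out the equality of wild kernels $W=\bar W$ explicitly.
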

\begin{proof}
	The argument is completely analogous to \cite[XI.6.1]{SGA1}. 
	Inspection of this reference shows that the only thing to
	be checked is that if $f:Y\rightarrow X_{\bar{k}}$ is a tame Galois
	covering, then $f$ is the base change of a tame covering of $X_{k'}$
	for some finite extension $k'/k$. It is clear that $f$ comes via base
	change from a Galois covering $Z\rightarrow X_{k'}$ for some $k'$. Let
	$\phi:C\rightarrow X_{k'}$ be a nonconstant map from a regular
	$k$-curve $C$ to $X_{k'}$. By enlarging $k'$ we may assume that the
	points of $\overline{C}\setminus C$ are $k'$-rational, where
	$\overline{C}$ is the unique normal proper $k'$-curve compactifying
	$C$, and similarly for the points at infinity of $Z\times_{X_{k'}} C$ lying over
	$\overline{C}\setminus C$. 
	
	Since a finite, totally ramified map of discrete valuation rings is tamely
	ramified if and only if the corresponding map on strict henselizations is
	tamely ramified, it follows that $\phi_Z:C\times_{X_{k'}}Z\rightarrow C$ is
	tame if and only if its base change to $\bar{k}$ is tamely ramified.

	Consequently, $Z\rightarrow X_{k'}$ is tame, which is what we set out to prove.
\end{proof}
\begin{Lemma}\label{lem:tamevsPI}Let $K$ be a field equipped with a discrete valuation $v$, $L/K$ a
	finite separable extension and $K'/K$ a finite purely inseparable
	extension. Let $v'$ denote the unique extension of $v$ to $K'$. Then
	$v$ is tamely ramified in $L/K$ if and only if $v'$ is tamely ramified
	in $L\otimes_K K'/K'$.
\end{Lemma}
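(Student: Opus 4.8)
The plan is to reduce to the henselian case and then prove the two implications by different means: the forward direction by the explicit structure theory of tamely ramified extensions, and the converse by a Frobenius (perfection) trick.

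First I would record the setup. Since $K'/K$ is purely inseparable and $L/K$ separable, the two fields are linearly disjoint over $K$, so $L\otimes_K K'=LK'=:L'$ is a field; it is separable over $K'$ and purely inseparable over $L$. In particular every extension $w$ of $v$ to $L$ has a unique extension $w'$ to $L'$, giving a bijection between the places of $L$ over $v$ and the places of $L'$ over $v'$. If $\Char(K)=0$ then $K'=K$ and there is nothing to prove, so assume $p:=\Char(K)>0$. Because tameness with respect to a discrete valuation is unchanged by henselization (the henselization $K^h$ splits $L$ into the product of the henselizations at the $w_i$, preserving $e$, $f$, and separability of the residue extension, while $K'\otimes_K K^h$ is the henselization of $K'$ at $v'$), I may assume throughout that $K$ is henselian, so that $v,v',w,w'$ are the unique extensions and $[L':K']=[L:K]$, $[L':L]=[K':K]$.

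For the implication ``$v$ tame in $L/K$'' $\Rightarrow$ ``$v'$ tame in $L'/K'$'': assuming $L/K$ tame, write $L=L_0(\pi)$, where $L_0/K$ is the unramified inertia field (with $k(w)/k(v)$ separable of degree $f=[L_0:K]$) and $\pi^e=u\pi_K$ for a uniformizer $\pi_K$ of $K$, some $u\in\mathcal{O}_L^\times$, and $p\nmid e$. I would first check that $L_0K'/K'$ stays unramified with separable residue extension: since $k(w)/k(v)$ is separable and $k(v')/k(v)$ is purely inseparable, $k(w)\otimes_{k(v)}k(v')$ is a field, separable of degree $f$ over $k(v')$; comparing with $[L_0K':K']\leq f$ forces $L_0K'/K'$ to be unramified of degree $f$. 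For the ramified part, substituting $\pi_K=(\text{unit})\cdot\pi_{K'}^{\,p^b}$ with $p^b=e(v'/v)$ gives $\pi^e=u'\pi_{K'}^{\,p^b}$; as $\gcd(e,p^b)=1$ a monomial $\tau=\pi^{\beta}\pi_{K'}^{\alpha}$ is a uniformizer of $L'$ with $\tau^e\in(\text{unit})\pi_{K'}$, so by the degree count $[L':L_0K']=ef/f=e$ the extension $L'/L_0K'$ is tamely totally ramified of degree $e$. Hence $L'/K'$ is tame. This computation applies verbatim to any finite purely inseparable base change of any tame finite separable extension.

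The converse is the main obstacle. A direct comparison of ramification indices and residue degrees through the two towers $K\subset L\subset L'$ and $K\subset K'\subset L'$ does not suffice, because the $p$-parts of the invariants can be shifted by the (purely inseparable, possibly defective) steps $L'/L$ and $K'/K$, and the residue field of $L'$ need not be the compositum $k(w)\,k(v')$. To circumvent this I would use the Frobenius isomorphism. For any finite separable $M/E$ and $m\geq 0$, the map $x\mapsto x^{p^m}$ carries $ME^{1/p^m}$ isomorphically onto $M^{p^m}E=M$ (the last equality by separability of $M/E$) and $E^{1/p^m}$ onto $E$, compatibly with the unique valuations; thus $ME^{1/p^m}/E^{1/p^m}$ is isomorphic as a valued-field extension to $M/E$, and in particular one is tame iff the other is. Now choose $m$ with $K'\subset K^{1/p^m}$. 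Applying the already-proved forward implication to the purely inseparable extension $K^{1/p^m}/K'$ shows that if $L'/K'$ is tame then $L'K^{1/p^m}/K^{1/p^m}=LK^{1/p^m}/K^{1/p^m}$ is tame; and the Frobenius identification (with $E=K$, $M=L$) shows the latter is tame iff $L/K$ is tame. This yields the converse and completes the proof.
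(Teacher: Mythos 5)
Your proposal takes a genuinely different route from the paper's. The paper passes to completions and runs a single numerical comparison: since $L'_{w'}=L_w\otimes_{K_v}K'_{v'}$ with $[K'_{v'}:K_v]=[L'_{w'}:L_w]$, one gets $ef=[L_w:K_v]=[L'_{w'}:K'_{v'}]=e'f'$, and both implications follow by bookkeeping with this identity and the multiplicativity of separable degrees of the residue extensions. Your forward direction via the explicit structure theory of tame extensions and your converse via the Frobenius identification of $ME^{1/p^m}/E^{1/p^m}$ with $M/E$ are attractive and more conceptual, but as written each has a gap.

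First, you reduce to the henselian case and then use $[L:K]=ef$ --- explicitly in the degree count $[L':L_0K']=ef/f$, and implicitly in writing $L=L_0(\pi)$ with $[L:L_0]=e$. Over a henselian but non-complete discretely valued field this can fail: there exist separable defect extensions with $e=f=1$ and $[L:K]=p$, and such an extension is ``tamely ramified'' in the sense of \Cref{defn:tameness} (separable residue extension, ramification index prime to $p$), so your hypothesis does not exclude it; the structure theory you invoke is simply false for it. Passing to the completion instead repairs this, since a complete DVR is excellent and $ef=n$ holds unconditionally (at the cost of $L\otimes_K\hat K$ being a product of fields, which one handles factor by factor) --- this is exactly what the paper does. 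Second, in the converse you apply the ``already-proved forward implication'' to $K^{1/p^m}/K'$, but that implication was proved for \emph{finite} purely inseparable extensions, and $K^{1/p^m}/K$ is finite only when $[K:K^p]<\infty$, which is not among the hypotheses (nor guaranteed in the paper's application, where the ground field in \Cref{prop:nilpotent} is arbitrary). The Frobenius trick forces you to use all of $K^{1/p^m}$, so you cannot shrink to a finite subextension; you must instead verify that the forward argument extends to this possibly infinite extension (it does, since the value group of $K^{1/p^m}$ lies in $\frac{1}{p^m}\Z$ and is therefore still discrete, but this needs to be said). Both gaps are fixable; by comparison, the paper's argument is shorter and avoids them entirely.
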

\begin{proof}First note that $L':=L\otimes_K K'$ really is a field, as $K'$ and
	$L$ are linearly disjoint when embedded in any algebraic closure of $K$. Without loss of generality we may assume
	that 
	$K'=K[X]/(X^p-a)$ with $a\in K\setminus K^p$.  Let $w$ be a valuation on $L$
	extending $v$ and $w'$ the unique extension of $w$ to $L'$. Note that
	the completion $K'_{v'}$ is either $K_v$ or $K_v[X]/(X^p-a)$,
	depending on whether $a$ is a $p$-th power in $K_v$ or not. As
	$L_w/K_v$ is separable, $a$ is a $p$-th power in $L_w$ if and only if
	it is a $p$-th power in $K_v$. Thus in the commutative diagram 
	\begin{equation*}
		\begin{tikzcd}
			L_w\ar{r}&L'_{w'}\\
			K_v\rar \uar&K'_{v'}\uar
		\end{tikzcd}
	\end{equation*}
	the horizontal arrows are either both purely inseparable of degree $p$
	or both isomorphisms. In either case 
	$L'_{w'}=L_w\otimes_{K_v}K'_{v'}$.  Write $e$ and $e'$ for the
	ramification indices of $w|v$ and $w'|v'$, and $f,f'$ for the residue
	degrees of these extensions. Then (\cite[II, Cor.~1, p.~29]{Serre/LocalFields})
	\[ef=[L_w:K_v]=[L'_{w'}:K'_{v'}]=e'f'.\]
	
	Consider the diagram of the
	residue extensions
	\begin{equation*}
		\begin{tikzcd}
			k(w)\ar{r}&k(w')\\
			k(v)\rar \uar&k(v')\uar.
		\end{tikzcd}
	\end{equation*}
	In general, it is not true that 
	$k(w')=k(w)\otimes_{k(v)}k(v')$.

	We know that the bottom horizontal arrow is either an isomorphism or purely inseparable of degree $p$.
	If $w|v$ is tamely ramified, then $k(w)/k(v)$ is separable, so again
	we see that the horizontal arrows are either isomorphisms or both
	purely inseparable of
	degree $p$ and $k(w')=k(w)\otimes_{k(v)}k(v')$. In either case $f=f'$ and
	$k(w')/k(v')$ is separable. From $ef=e'f'$ it follows that $(e',p)=1$
	and hence that $w'|v'$ is tamely ramified.

	Conversely, assume that $w'|v'$ is tamely ramified. Let $k^s\subset
	k(w)$ be the separable closure of $k(v)$ in $k(w)$. By multiplicativity of the
	separable degree, we see that $f'=[k^s:k(v)]$. Thus
	$f=f'p^{\varepsilon}$ where $\varepsilon=0$ or $1$. But by assumption
	$(e',p)=1$, so from $ef=e'f'$ it follows that $\varepsilon=0$, $f=f'$
	and $e'=e$. In particular, $w|v$ is tamely ramified.
	
	%%	
%	
%	Moreover, we may assume that $a$ is not
%	a $p$-th power in $K$, i.e.~that $[K':K]=[L':L]=p$.
%
%	Assume that $L/K$ is tamely ramified. Let $w$ be a valuation on $L$
%	extending $v$ and let $v'$, $w'$ be the unique extensions to $K'$,
%	$L'$, and $k(v), k(w), k(v'),k(w')$ the respective residue fields. As
%	$k(w)/k(v)$ is separable, the image $\bar{a}\in k(v)$ of $a$ is a
%	$p$-th root in $k(v)$ if and only if it is a $p$-th root in $k(w)$. It
%	follows that $k(w')=k(w)\otimes_{k(v)}k(v')$, so $k(w')/k(v')$ is
%	separable.
%	Let $e$ denote the ramification index of $L/K$, $f=[k(w):k(v)]$, and
%	$e',f'$ the analogous invariants of $L'/K'$.  We just saw that $f=f'$.
%	On the other hand, looking at the henselizations, we see that
%	$ef=e'f'$, so $e'$ is prime to $p$ and hence $L'/K'$ is tamely
%	ramified with respect to $w$ and $v$.
%
\end{proof}

\section{Nilpotent coverings}
\label{sec:nilpotent}
In this section we prove a special case of \Cref{thm:main}.
\begin{Proposition}\label{prop:nilpotent}
	Let $k$ be a field and $X$ a regular, irreducible, separated, finite
	type $k$-scheme. There exists a good partial compactification
	$(X,\overline{X}_{\nilp})$ such that for every finite \'etale covering
	$f:Y\rightarrow X$, which can be \emph{dominated by a nilpotent Galois
	covering}, the following are equivalent:
\begin{enumerate}[label=\emph{(\alph*)}]
		\item $f$ is tame.
		\item $f$ is tamely ramified with respect to
			$(X,\overline{X}_{\nilp})$.
	\end{enumerate}
	Moreover, if $X$ is quasi-projective, then we can choose $\overline{X}_{\nilp}$ to be quasi-projective as well.
\end{Proposition}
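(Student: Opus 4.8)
The plan is to reduce the statement to a single prime, where it becomes a purity statement that can be forced by a regularizing \emph{purely inseparable} alteration. If $\Char k=0$ every finite \'etale covering is tame and one takes $\overline{X}_{\nilp}=X$, so assume $\Char k=p>0$. First I would reduce to a Galois covering with nilpotent group: if $f$ is dominated by a nilpotent Galois covering, then so is its Galois closure $\tilde f$, whose group is a quotient of the dominating nilpotent group and hence nilpotent; since the kernels of $\pi_1^{\et}(X)\to\pi_1^{\tame}(X)$ and of $\pi_1^{\et}(X)\to\pi_1^{\tame}((X,\overline{X}))$ are normal, a normal-core argument shows that $f$ is tame (resp.\ tame with respect to a good partial compactification) if and only if $\tilde f$ is. So assume $f$ is Galois with nilpotent group $G=\prod_\ell G_\ell$, the product of its Sylow subgroups, and decompose $f$ as the product $\prod_\ell f_\ell$ of the corresponding Sylow subcoverings. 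For $\ell\neq p$ the group of $f_\ell$ has order prime to $p$, so $f_\ell$ has trivial wild inertia everywhere and is both tame and tame with respect to every good partial compactification. Hence (a) and (b) hold for $f$ if and only if they hold for $f_p$, reducing us to the case that $G$ is a $p$-group. For such a covering the inertia at any valuation is a $p$-group, equal to its own wild part, so tameness at a valuation is the same as being unramified there; this is exactly the feature that fails for the entangled group $G$ of \Cref{thm:main}, and is where nilpotency is used.

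By \Cref{prop:blowup} it now suffices to produce a finite set $\{v_1,\dots,v_n\}$ of geometric discrete valuations of $k(X)$, none centered on $X$, such that every $p$-group covering unramified at all the $v_i$ is unramified at every geometric valuation; the pair $(X,\overline{X}_{\nilp})$ produced by \Cref{prop:blowup} is quasi-projective whenever $X$ is, since the blow-ups involved are projective. To build this set, fix a proper normal model $\overline{X}_0\supseteq X$ (projective if $X$ is quasi-projective). Every geometric valuation either has center on $X$, where $f$ is automatically unramified, or has its unique center on $\overline{X}_0\setminus X$. The obstruction is that $\overline{X}_0$ is singular, so Zariski--Nagata purity fails and unramifiedness along the boundary divisors need not propagate to valuations centered in higher codimension. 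To repair this I would invoke Temkin's inseparable local uniformization (\cite{Temkin}): passing to the Riemann--Zariski space and using its quasi-compactness, one obtains a proper, \emph{generically purely inseparable} alteration $h\colon\overline{X}_0'\to\overline{X}_0$ with $\overline{X}_0'$ regular and $\overline{X}_0'\setminus X'$ a strict normal crossings divisor, where $X'=h^{-1}(X)$. Let $v_1',\dots,v_n'$ be the divisorial valuations of this boundary and set $v_i:=v_i'|_{k(X)}$; these are geometric by \Cref{prop:dominant-geometric}, and since $h$ maps the boundary into $\overline{X}_0\setminus X$, none of them is centered on $X$.

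The key point is the compatibility of wild ramification of $p$-coverings with the purely inseparable alteration. Because $k(X')/k(X)$ is finite purely inseparable and each $v_i'$ is the unique extension of $v_i$, \Cref{lem:tamevsPI} shows that the base change $f'\colon Y\times_X X'\to X'$ (finite \'etale, being a base change of $f$) is unramified at $v_i'$ if and only if $f$ is unramified at $v_i$, and similarly at every geometric valuation. Thus, assuming (b), $f'$ is unramified along all boundary divisors of the \emph{regular} proper scheme $\overline{X}_0'$, so the normalization of $\overline{X}_0'$ in $Y\times_X X'$ is \'etale over $\overline{X}_0'$ by purity, whence $f'$ is unramified at every geometric valuation of $k(X')$. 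Applying \Cref{lem:tamevsPI} in the reverse direction, $f$ is unramified at every geometric valuation of $k(X)$, i.e.\ tame; the implication (a)$\Rightarrow$(b) is immediate from \Cref{rem:kerzschmidt}.

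I expect the main obstacle to be the global construction in the second paragraph: assembling Temkin's local uniformizations into a single generically purely inseparable regular alteration with strict normal crossings boundary over the proper model, and checking that its finitely many boundary valuations restrict to a legitimate finite test set of geometric valuations on $X$ that are not centered on $X$. The insistence that the alteration be purely inseparable rather than merely an alteration is essential, since that is precisely what makes \Cref{lem:tamevsPI} applicable and lets one transport the wild ramification of $p$-coverings faithfully back and forth between $X$ and $X'$; a separable or finite alteration (as produced by de Jong) would not respect tameness of $p$-group coverings and would break the argument.
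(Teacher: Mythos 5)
Your proposal follows essentially the same route as the paper: Temkin's inseparable local uniformization applied to a proper normal compactification, transport of (wild) ramification across the purely inseparable extensions via \Cref{lem:tamevsPI}, Zariski--Nagata purity on the regular uniformizing schemes, and the decomposition $G\cong P\times P'$ of a nilpotent group into its $p$-part and prime-to-$p$ part (your Sylow reduction and ``tame $=$ unramified for $p$-coverings'' is the same device, just made more explicit). The one point where your write-up overshoots what is actually available -- and which you rightly flag as the main obstacle -- is the assertion of a \emph{single} proper, regular, generically purely inseparable alteration $\overline{X}_0'\to\overline{X}_0$ with strict normal crossings boundary: Temkin's theorem (\Cref{thm:Temkin}) does not produce this; it gives finitely many regular, \emph{non-proper} schemes $V_i\to\overline{X}_0$ which only jointly cover all valuations centered on $\overline{X}_0$, and gluing them into one proper regular model would amount to a form of resolution of singularities up to purely inseparable morphisms, which is not known. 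The paper sidesteps this entirely: it takes the finitely many codimension-one boundary points of all the $V_i$ as the test set $v_1,\dots,v_n$, and, given a wildly ramified geometric valuation $v$, uses properness of $\overline{X}_0$ only to guarantee a center, hence a lift $w$ with center on \emph{some} $V_i$, to which purity is applied locally on that $V_i$. Your argument adapts verbatim to this setting, so the gap is one of formulation rather than of substance.
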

%\begin{Remark}
%	Of course we would like to get rid of the nilpotency condition; at
%	present, the author does not know how to do that without assuming 
%	resolution of singularities. Below
%	the proof of \Cref{thm:finiteness} we will show what goes wrong
%	in the given proof.
%\end{Remark}

The proof of this result follows an idea suggested by Moritz Kerz. Before we proceed
with the argument, we recall the following main ingredient.
\begin{Theorem}[{``Inseparable local uniformization'',
		\cite[Cor.~1.3.3]{Temkin}}]\label{thm:Temkin}
	Let $X$ be an integral,  separated finite type $k$-scheme.
	Then there exist morphisms $\phi_i:V_i\rightarrow X$, $i=1,\ldots, r$,
	with the following properties.
	\begin{enumerate}[label=\emph{(\alph*)},ref=(\alph*)]
		\item\label{temkin1} The $V_i$ are \emph{regular}, integral, separated, finite type
			$k$-schemes.
		\item\label{temkin2} The maps $V_i\rightarrow X$  are dominant and
			of finite type. They \emph{cover} $X$ in
			the following sense: Any valuation on $k(X)$ (not necessarily
			discrete or rank $1$) with center on $X$ lifts
		to a valuation on some $k(V_i)$ with center on $V_i$.
	\item\label{temkin3} The induced extensions $k(X)\subset k(V_i)$ are finite and
			\emph{purely inseparable}, $i=1,\ldots, r$.
	\end{enumerate}
\end{Theorem}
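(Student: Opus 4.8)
The implication (a)$\Rightarrow$(b) is immediate and uses no hypothesis on $f$: by \Cref{rem:kerzschmidt} a tame covering is tamely ramified with respect to \emph{every} good partial compactification, in particular with respect to $(X,\overline{X}_{\nilp})$. The content is the reverse implication, and the plan is to reduce it, using nilpotence, to a statement about $p$-group coverings ($p=\Char k$; if $\Char k=0$ there is nothing to prove and one may take $\overline{X}_{\nilp}=X$), for which ``tame'' means simply ``unramified at every geometric discrete valuation''. First, passing to the Galois closure changes neither ``tame'' nor ``tame with respect to a fixed good partial compactification'': the maximal tamely ramified extension of a henselian valued field is Galois, so tameness at any given valuation passes between a covering and its Galois closure, and both of our conditions are checked valuation by valuation. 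Moreover the Galois closure of a covering dominated by a nilpotent Galois covering is again nilpotent, its group being a quotient of a nilpotent group. Thus we may assume $f$ is Galois with nilpotent group $G=G_p\times H$, where $G_p$ is the $p$-Sylow subgroup and $|H|$ is prime to $p$. Writing $f_p:Y/H\to X$ and $f_H:Y/G_p\to X$, one has $Y=(Y/H)\times_X(Y/G_p)$; since every inertia group of $f_H$ has prime-to-$p$ order, its ramification indices are prime to $p$ and (inseparability degrees being powers of $p$) its residue extensions are separable, so $f_H$ is \emph{unconditionally} tame. Hence at each discrete valuation the wild inertia of $f$ coincides with that of $f_p$, and both ``$f$ tame'' and ``$f$ tame with respect to $(X,\overline{X}_{\nilp})$'' are equivalent to the corresponding assertions for the $p$-group covering $f_p$.

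Next I construct $\overline{X}_{\nilp}$ in a way independent of the covering. Choose a proper normal model $\overline{X}_0\supseteq X$ of $k(X)$ (Nagata compactification followed by normalization, projective if $X$ is quasi-projective). Apply \Cref{thm:Temkin} to $\overline{X}_0$ to obtain regular integral $V_1,\dots,V_r$ with $k(X)=k(\overline{X}_0)\hookrightarrow k(V_i)$ finite purely inseparable, dominating and valuatively covering $\overline{X}_0$. Set $V_i^\circ=\phi_i^{-1}(X)$, let $D_{i,1},\dots,D_{i,s_i}$ be the codimension $1$ components of $V_i\setminus V_i^\circ=\phi_i^{-1}(\overline{X}_0\setminus X)$, and let $w_{i,j}$ be the restriction to $k(X)$ of the divisorial valuation of $D_{i,j}$. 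By \Cref{prop:dominant-geometric} each $w_{i,j}$ is a geometric discrete valuation of $k(X)$, and it has no center on $X$ since $D_{i,j}$ maps into $\overline{X}_0\setminus X$. Applying \Cref{prop:blowup} to the finite family $\{w_{i,j}\}$ yields a good partial compactification $(X,\overline{X}_{\nilp})$ whose codimension $1$ boundary points are exactly the $w_{i,j}$; this family, hence $\overline{X}_{\nilp}$, depends only on $X$ and $\overline{X}_0$, and quasi-projectivity is preserved by the blow-up construction.

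It remains to prove (b)$\Rightarrow$(a) for a $p$-group covering $f_p$, corresponding to a separable extension $L/k(X)$. By hypothesis $f_p$ is tamely ramified, hence unramified, at every $w_{i,j}$. Fix $i$, form the field $L':=L\otimes_{k(X)}k(V_i)$ (a field, as in \Cref{lem:tamevsPI}), and let $\overline{W}_i\to V_i$ be the normalization of $V_i$ in $L'$; it is finite and, since $f_p$ is étale over $X$, étale over $V_i^\circ$. By \Cref{lem:tamevsPI}, together with the equality ``tame $=$ unramified'' for $p$-groups, the hypothesis says $\overline{W}_i\to V_i$ is unramified at each $D_{i,j}$. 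As $V_i$ is regular, Zariski--Nagata purity of the branch locus forces the branch locus of $\overline{W}_i\to V_i$ to be pure of codimension $1$ and contained in $V_i\setminus V_i^\circ$, i.e.\ a union of some of the $D_{i,j}$; since none of these lies in it, the branch locus is empty and $\overline{W}_i\to V_i$ is finite étale. Now take any geometric discrete valuation $v$ of $k(X)$: if it is centered on $X$ then $f_p$ is étale there, and otherwise $v$ has a center on the proper scheme $\overline{X}_0$ lying in $\overline{X}_0\setminus X$, so by \Cref{thm:Temkin} its unique extension $v'$ to some $k(V_i)$ has a center on $V_i$. Since $\overline{W}_i\to V_i$ is everywhere étale, $v'$ is unramified, hence tame, in $L'/k(V_i)$; by \Cref{lem:tamevsPI} $v$ is tame in $L/k(X)$, so $f_p$ is unramified at $v$. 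Thus $f_p$ is unramified at every geometric discrete valuation, i.e.\ tame, and by the reductions $f$ is tame.

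The crux is the construction of a detecting set of valuations that does not depend on the covering, and this rests on exactly two points. The reduction to $p$-group coverings, which is where nilpotence enters through the splitting $G=G_p\times H$, replaces the delicate notion of tameness by the robust notion of étaleness; and Temkin's regular models $V_i$ together with purity of the branch locus confine the codimension $1$ branching of every such covering to the single fixed family $\{D_{i,j}\}$. I expect the main obstacle to be precisely this last confinement: without the passage to $p$-groups one cannot invoke purity, and the branch divisors would genuinely vary with the covering, so it is the nilpotence hypothesis that makes a single $\overline{X}_{\nilp}$ possible.
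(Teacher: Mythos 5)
Your proposal does not prove the statement under review. The statement is Temkin's inseparable local uniformization theorem (\Cref{thm:Temkin}): for an integral, separated, finite type $k$-scheme $X$ there exist finitely many \emph{regular} schemes $V_i$ with dominant finite type maps $V_i\rightarrow X$ covering $X$ valuatively, such that $k(V_i)/k(X)$ is finite purely inseparable. In the paper this is an external input, quoted from \cite[Cor.~1.3.3]{Temkin} with no proof given; it is a deep theorem (a local, purely-inseparable-alteration substitute for resolution of singularities, proved via Riemann--Zariski spaces) that cannot be derived from the other results of this article. What you have written is instead a proof of \Cref{prop:nilpotent} --- the equivalence of ``tame'' and ``tamely ramified with respect to $(X,\overline{X}_{\nilp})$'' for coverings dominated by nilpotent Galois coverings --- and, decisively, your construction of $\overline{X}_{\nilp}$ begins by \emph{applying} \Cref{thm:Temkin} to a compactification $\overline{X}_0$. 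As an argument for \Cref{thm:Temkin} this is circular, and nothing in your text addresses the actual content of the statement, namely how to produce the regular models $V_i$ and why centers of arbitrary (possibly non-discrete, higher rank) valuations lift.

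For what it is worth: read as a blind proof of \Cref{prop:nilpotent}, your argument is essentially the paper's own, with the same skeleton --- Temkin applied to a normal (projective, in the quasi-projective case) compactification, restriction of the boundary divisorial valuations to $k(X)$ via \Cref{prop:dominant-geometric}, \Cref{prop:blowup} to realize them on a good partial compactification, \Cref{lem:tamevsPI} to transfer tameness across the purely inseparable extensions $k(V_i)/k(X)$, the splitting $G\cong P\times P'$ of the nilpotent Galois group, and Zariski--Nagata purity on the regular $V_i$ to confine the (automatically wild) ramification of the $p$-part to the fixed boundary divisors. Your two additional reductions --- passing to the Galois closure, and observing that the prime-to-$p$ factor is unconditionally tame so that everything reduces to the $p$-group covering, where tame means unramified --- are correct but cosmetic refinements of the paper's contrapositive argument. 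None of this, however, bears on the theorem you were asked to prove, so the verdict stands: the proposal has a fatal gap in that it assumes precisely the statement at issue.
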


\begin{proof}[Proof of \Cref{prop:nilpotent}]
	Let $\overline{X}$ be a normal compactification of $X$ and apply
	\Cref{thm:Temkin}  to $\overline{X}$.  We obtain a map
	$\phi=\coprod_{i=1}^r\phi_i:\coprod_{i=1}^rV_i\rightarrow \overline{X}$, satisfying
	\ref{temkin1} -- \ref{temkin3}. Write
	$U_i:=\phi_i^{-1}(X)$. Then on each $V_i$ there are finitely many
	codimension $1$ points not contained in $U_i$. They correspond to
	geometric discrete valuations on $k(V_i)$, and hence give rise to a
	finite number of geometric discrete valuations $v_1,\ldots, v_n$ on $k(X)$
	(\Cref{prop:dominant-geometric}). It suffices to prove that a nilpotent Galois covering
	$f:Y\rightarrow X$ is  tame if and only if it is tamely ramified with respect to
	$v_1,\ldots, v_n$, or equivalently, with respect to any good partial
	compactification $(X,\overline{X}_{\nilp})$ on which $v_1,\ldots, v_n$
	have centers in codimension $1$ points.
	
	As $X$ is regular, \Cref{rem:kerzschmidt} shows that $f$ is tame if and only if it
is tamely ramified with respect to all geometric discrete valuations of $k(X)$, in particular with
respect to $v_1,\ldots, v_n$. 

Assume conversely that there is a geometric discrete valuation $v$ on $k(X)$ with
respect to which $f$ is wildly ramified. We want to show that $f$ is wildly
ramified with respect to one of the valuations $v_1,\ldots, v_n$. As
$\overline{X}$ is proper, $v$ has a center $x$ on $\overline{X}$ (possibly
with
$\codim_{\overline{X}} \overline{\{x\}}>1$). It follows
that there exists a valuation $w$, on, say, $k(V_1)$, extending $v$ and 
having a center on $V_1$. As
$k(V_1)/k(X)$ is purely inseparable, $w$ is the only extension of $v$ to
$k(V_1)$.

Write $f_1:{Y}_1\rightarrow U_1$ for the base change of
${f}$ to $U_1=\phi_1^{-1}(X)$. According to \Cref{lem:tamevsPI}, $f_1$ is  wildly 
ramified with respect to $w$ and we claim that this implies that $f_1$ is
wildly ramified with respect to a codimension $1$ point lying on $V_1\setminus
U_1$. Applying \Cref{lem:tamevsPI} again, this would imply that $f$ is wildly ramified with respect to one of the
valuations $v_1,\ldots, v_n$, which is what we want to prove.

To prove the claim, we proceed along the lines of \cite[Prop.~1.10]{Schmidt}.
Let $G=\Gal(Y/X)$. As $G$ is nilpotent, $G\cong P\times P'$ with $P$ a
$p$-group and $P'$ a group of order prime to $p:=\Char(k)$.
In particular, the covering
$Y_1\rightarrow U_1$ can be written as a tower of Galois coverings $f_1:Y_1\xrightarrow{a}
Y'_1\xrightarrow{b} U_1$, with $\Gal(b)=P$ and $\Gal(a)=P'$.
As the discrete valuation $w$ of $k(V_1)$ is wildly ramified in $k(Y_1)$, it
follows that $w$ is ramified in $k(Y'_1)$. On the other hand, as $w$ has a
center on the regular scheme $V_1$, the Zariski-Nagata purity theorem (\cite[X, Thm.~3.1]{SGA1}) implies that the normalization of $V_1$ in
$k(Y'_1)$ is ramified over a closed subscheme of pure codimension $1$
contained in $V_1\setminus U_1$. As
$Y'_1\rightarrow U_1$ has $p$-power degree, this ramification is wild. This proves the claim.

Finally, if $X$ is quasi-projective, then we can choose $\overline{X}$ to be a projective, normal compactification of $X$. The above construction applied to $\overline{X}$ yields a quasi-projective $\overline{X}_{\nilp}$ with the desired properties. 

%
%Let
%$w'$ be a discrete valuation of $Y_1:=V_1\times_X Y$ lying over $w$ such that
%$w'/w$ is wildly ramified. 
%Let
%$P\subset G$ be a cyclic subgroup of order $p=\Char(k)$, contained in the wild inertia group subgroup of
%$G$ attached to $w'$. Then $Y_1\rightarrow Y_1/H$ is Galois \'etale of degree $p$, and  ramified in a point
%of $Y_1/H$ lying over the center of $w$ on $V_1$. Indeed, as $w$
%has a center on $V_1$, $w'$ has a center on $Y_1$, and thus $w'|_{Y_1/H}$ has
%a center $y$ on $Y_1|_H$.  It follows that $Y_1\rightarrow  Y_1/H$ is ramified
%over $y$. 
%
%As
%$Y_1/H$ is regular, Zariski's purity theorem implies that there is
%a codimension $1$ point $\eta$ on $(V_1\times_X Y)/H$ over which $V_1\times_X Y$ is
%ramified. In fact the map is \emph{wildly ramified} over $\eta$ as $H$ has
%order $p$. This
%means that $f_Y$ is wildly ramified over a codimension $1$ point of $V_1$
%(namely the image of $\eta$). The restriction to $\overline{X}$ of the
%valuation associated with $\eta$ is one of the valuations $v_1,\ldots, v_n$,
%so a final application of \autoref{lem:tamevsPI} implies that $f$ is wildly
%ramified with respect to one of the $v_1,\ldots, v_n$, which is what we wanted
%to prove.
\end{proof}

%\begin{Remark}
%	We explain the difficulty in generalizing this argument to arbitrary
%	Galois coverings.
%	Continue to use the notations of the proof but assume that
%	$f:Y\rightarrow X$ is Galois with arbitrary Galois group $G$. Again
%	let $v$ be a geometric discrete valuation of $k(X)$ which
%	ramifies wildly in $f$ and let $w$ be the corresponding wildly
%	ramified valuation on, say, $k(V_1)$. One could then try to argue as
%	follows: Let $w'$ be a valuation on $k(Y_1)$ extending $w$, such that
%	$w'|w$ is wildly ramified. Let $P$ be the wild inertia subgroup
%	of $G$ associated with $w'|w$. Then $f_1$ factors $Y_1\xrightarrow{a}
%	Y'_1\xrightarrow{b} U_1$, now with $\Gal(a)=P$, and $b$ not
%	necessarily Galois. The normalization $\overline{Y'}_1$ of $V_1$ in $k(Y'_1)$ is
%	generally not regular, so Zariski-Nagata purity does not apply. We
%	could use \Cref{thm:Temkin} to replace $\overline{Y'}_1$ by a regular scheme, but then
%	we would perhaps have to add more valuations to $v_1,\ldots, v_n$.
%\end{Remark}

\section{Applying a theorem of Drinfeld}
\label{sec:proof}
We begin by establishing the following consequence of a Lefschetz theorem of
Drinfeld.
\begin{Proposition}\label{prop:topFG}
	Assume that the field $k$ is either algebraically closed or finite. Let 
	$X'$ be a normal, projective, geometrically irreducible $k$-scheme and let $X\subset X'$ be a
	smooth, dense open subscheme together with a geometric point $\bar{x}\rightarrow
	X$. Let $\Sigma\subset X'$ be a
	closed subset satisfying the following conditions.
	\begin{enumerate}[label=\emph{(\alph*)}]
		\item $\codim_{X'} \Sigma \geq 2$, 
		\item $X'\setminus \Sigma$ is smooth,
		\item $(X'\setminus X)\setminus \Sigma$ is the support of a smooth divisor. 
	\end{enumerate}
	Then $(X, X'\setminus \Sigma)$ is a good partial compactification and
	the 
	profinite group $\pi^{\tame}_1((X,X'\setminus \Sigma),\bar{x})$
(\Cref{defn:tameness}) is topologically finitely generated.

	In particular, for a given $N\in \N$, the group  $\pi^{\tame}_1( (X,X'\setminus \Sigma),\bar{x})$ only has finitely many open subgroups of index $\leq N$.
\end{Proposition}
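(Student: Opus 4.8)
The plan is to reduce the computation of $\pi_1^{\tame}((X,X'\setminus\Sigma),\bar{x})$ to the case of a curve by means of the generalization of Drinfeld's Lefschetz theorem, and then to invoke the topological finite generation of the tame fundamental group of a curve. I would first dispose of the claim that $(X,X'\setminus\Sigma)$ is a good partial compactification, which is immediate from the hypotheses: writing $\overline{X}:=X'\setminus\Sigma$, condition (b) says that $\overline{X}$ is smooth over the perfect field $k$, hence regular; it is separated and of finite type as a locally closed subscheme of the projective scheme $X'$; and $X\hookrightarrow\overline{X}$ is a dominant open immersion whose complement is $\overline{X}\setminus X=(X'\setminus X)\setminus\Sigma$, which by (c) is the support of a smooth, and therefore strict normal crossings, divisor. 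This is exactly \Cref{defn:gpc}.

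The heart of the argument is to produce a curve computing the tame fundamental group, and here I would apply \cite[Prop.~6.2]{EsnaultKindler/Lefschetz}. Using that $\codim_{X'}\Sigma\geq 2$, one cuts $X'\setminus\Sigma$ with general hypersurface sections to obtain a smooth, geometrically irreducible projective curve $C$ contained in $X'\setminus\Sigma$ (so that $C$ avoids $\Sigma$ entirely) which meets the boundary divisor $(X'\setminus X)\setminus\Sigma$ transversally in finitely many points of its smooth locus and passes through $\bar{x}$, such that, with $U:=C\cap X$, the pair $(U,C)$ is a good \emph{compactification} of the affine curve $U$ and the restriction map on tame fundamental groups
\[
\pi_1^{\tame}(U,\bar{x})\longrightarrow \pi_1^{\tame}((X,\overline{X}),\bar{x})
\]
is surjective. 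Matching the tameness on $C$ (with respect to the punctures $C\setminus U$) with the tameness with respect to $(X,\overline{X})$ is precisely what the cited proposition guarantees; note that since $C$ is proper, the absolute tame fundamental group $\pi_1^{\tame}(U,\bar{x})$ coincides with the one relative to $(U,C)$, so that the two flavours of tameness are compatible under restriction.

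It then remains to observe that $\pi_1^{\tame}(U,\bar{x})$ is topologically finitely generated and to transport this across the quotient. Over $\bar{k}$ this is classical: $\pi_1^{\tame}(U_{\bar{k}})$ is topologically generated by $2g+n$ elements, with $g$ the genus of $C$ and $n=\#(C\setminus U)$. When $k$ is finite I would feed this into the short exact sequence of \Cref{lem:SES},
\[
1\longrightarrow \pi_1^{\tame}(U_{\bar{k}},\bar{x})\longrightarrow \pi_1^{\tame}(U,\bar{x})\longrightarrow \Gal(\bar{k}/k)\longrightarrow 1,
\]
and use that $\Gal(\bar{k}/k)\cong\widehat{\Z}$ is topologically generated by Frobenius: an extension of topologically finitely generated profinite groups is again topologically finitely generated. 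As a continuous quotient of such a group inherits the property, the surjection above shows that $\pi_1^{\tame}((X,\overline{X}),\bar{x})$ is topologically finitely generated. The final assertion is then a standard fact about profinite groups: if $G$ is topologically generated by $d$ elements, every continuous homomorphism $G\to S_n$ is determined by the images of these $d$ generators, so there are at most $(n!)^d$ of them; each open subgroup of index $n$ is a point-stabilizer of the transitive continuous action of $G$ on its coset space and hence arises from such a homomorphism, giving at most $n\cdot(n!)^d$ open subgroups of index $n$, and summing over $n\leq N$ yields finiteness.

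The main obstacle I anticipate is the second step: correctly invoking Drinfeld's Lefschetz theorem and verifying its hypotheses, namely the existence of a Lefschetz curve that simultaneously avoids the codimension $\geq 2$ locus $\Sigma$, meets the boundary divisor transversally inside its smooth locus, and carries the compatibility of the two notions of tameness needed for surjectivity of the induced map on tame fundamental groups. This is exactly the point at which the three conditions (a)--(c), and in particular $\codim_{X'}\Sigma\geq 2$, are essential, since they are what allow the curve to be chosen away from the high-codimension locus while still detecting all of the tame fundamental group.
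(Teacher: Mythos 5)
Your proposal is correct and follows essentially the same route as the paper: invoke \cite[Prop.~6.2]{EsnaultKindler/Lefschetz} to produce a curve $C\subset X'\setminus\Sigma$ for which restriction gives a surjection $\pi_1^{\tame}(C\cap X,\bar c)\twoheadrightarrow\pi_1^{\tame}((X,X'\setminus\Sigma),\bar c)$, then conclude by topological finite generation of the tame fundamental group of a curve (\cite[XIII, Thm.~2.12]{SGA1} together with \Cref{lem:SES} in the finite-field case). Your explicit verification of the good-partial-compactification claim and your direct counting argument for the final assertion (where the paper cites \cite[Lemma 16.10.2]{Jarden/FieldArithmetic}) are fine elaborations of the same argument.
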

\begin{Remark}
A profinite group which has only finitely many quotients of a given
	cardinality is called \emph{small}. For a related, but smallness
	result for fundamental groups of varieties over finite fields, see
	\cite{Hiranouchi}, which utilizes a deep finiteness theorem of Deligne (\cite{Esnault/Kerz}).
\end{Remark}
\begin{proof}
	By \cite[Prop.~6.2]{EsnaultKindler/Lefschetz}, which is a generalization of  \cite[Appendix C]{Drinfeld}, there exists a closed, smooth, irreducible $k$-curve $C\subset X'\setminus \Sigma$ not contained in $X'\setminus X$ and intersecting $((X'\setminus \Sigma)\setminus X)_{\red}$ transversely,  such that for every finite irreducible \'etale covering $Y\rightarrow X$ which is tamely ramified with respect to $(X,X'\setminus \Sigma)$, the pullback $Y\times_X (C\cap X)\rightarrow (C\cap X)$ is irreducible. This means that for a geometric point  $\bar{c}$ of $C\cap X$, the induced map
	\[\pi_1^{\tame}(C\setminus X,\bar{c})\rightarrow \pi_1^{\tame}( (X,X'\setminus
		\Sigma),\bar{c})\]
	is surjective. The group on the left is known to be topologically
	finitely generated if $k$  is algebraically closed or finite (\cite[XIII, Thm.~2.12]{SGA1} and \Cref{lem:SES}).

	Finally, according to \cite[Lemma 16.10.2]{Jarden/FieldArithmetic}, a topologically finitely generated group only
	has finitely many open subgroups of index $\leq N$.
\end{proof}

\begin{proof}[Proof of \Cref{thm:main}]
	Fix a geometric point $\bar{x}$ of $X$ and
	let $X'$ be any projective normal compactification of $X$. We ``approximate'' the good partial compactification $X_N$ in several steps. First, let $X'_{N}$ be an open subset of $X'$ containing $X$, such that $(X,X'_N)$ is a good partial compactification and such that $\codim_{X'}(X'_N\setminus X)\geq 2$. According to \Cref{prop:topFG}, there are only finitely many open normal subgroups of index $\leq N$ in  $\pi_1^{\tame}( (X,X'_N),\bar{x})$.
In particular, there are only finitely many Galois coverings $Y_1/X,\ldots, Y_m/X$, $m\in \N$, of degree $\leq N$, which are tamely ramified with respect to $(X,X'_{N})$ but not tame in the sense of \Cref{defn:tameness}, \ref{item:absoluteTameness}. According to \Cref{rem:kerzschmidt}, for every $i\in \{1,\ldots, m\}$ we find a geometric discrete valuation $v_i$ on $k(X)$ with respect to which $Y_i/X$ is wildly ramified. Blowing up $X'$ repeatedly in the centers of the $v_i$ and normalizing, we obtain a normal projective compactification $X''$ of $X$, such that each $v_i$ is centered in a codimension $1$ point of $X''$ (see \Cref{prop:blowup}). Let   $X''_N$  be an open subset of $X''$ containing $X$ and all codimension $1$ points of $X''$, such that $(X,X''_N)$ is a good partial compactification of $X$. It follows that a Galois covering of degree $\leq N$ of $X$ is tame if and only if it is tamely ramified with respect to the good partial compactification $(X,X''_N)$.

Now let $f:Y\rightarrow X$ be a Galois \'etale covering with group $G$ such that $G$ contains a normal nilpotent subgroup $H$ of index $\leq N$. It factors as 
\begin{equation}\label{eq:factorization}
		\begin{tikzcd}
		Y\ar[swap]{dd}{f}\ar{dr}{f_H}\\
		&Y'\ar{dl}{f_{G/H}}\\
		X
	\end{tikzcd}
\end{equation}
where $f_H$ is Galois \'etale with group $H$ and $f_{G/H}$ is Galois \'etale with group $G/H$; in particular, $\deg(f_{G/H})\leq N$. If $f$ is tamely ramified with respect to $(X,X''_N)$, then so is $f_{G/H}$, and using \Cref{prop:topFG} again, we see that there are only finitely many possibilities for $f_{G/H}$; we write them as $Y'_1/X,\ldots, Y'_{m'}/X$. %Note that by construction of $X''_N$, the coverings $Y_1,\ldots, Y_m$ are tame in the sense of \Cref{defn:tameness}, \ref{item:absoluteTameness}.

For $i=1,\ldots, m'$, we apply \Cref{prop:nilpotent} to the $k$-scheme $Y'_i$ and obtain a finite set $\{w_{ij}\}_{j=1,\ldots, r_i}$ of geometric discrete valuations on $k(Y'_i)$ with the property that a nilpotent covering of $Y'_i$ is tame if and only if it is tamely ramified with respect to $w_{ij}$, $j=1,\ldots, r_i$. As $Y'_i/X$ is finite,  \Cref{prop:dominant-geometric} shows that the restriction $v_{ij}:=(w_{ij})|_{k(X)}$ is a geometric discrete valuation on $k(X)$.

Now assume that $f$ is tamely ramified with respect to $(X,X''_N)$ and with respect to all $v_{ij}$.  As $\deg(f_{G/H})\leq N$, $f_{G/H}$ is tame by construction of $X''_N$, and $f_{G/H}$ is one of the coverings $Y'_1/X,\ldots, Y'_{m'}/X$. Moreover, as the valuations $w_{ij}$ arise from 
\Cref{prop:nilpotent}, it follows that if $f_{H}$ is wildly ramified, then there exists a pair $(i,j)$, such that $f_H$ is wildly ramified with respect to $w_{ij}$, which implies that $f$ is wildly ramified with respect to $v_{ij}$; contradiction. Thus $f$ is tame.

Finally, blow up $X''$ repeatedly in the centers of the $v_{ij}$ and normalize to obtain $X'''$, a normal projective compactification of $X$, such that each $v_{ij}$ is centered in a codimension $1$ point of $X'''$.   Let $X_N$ be a suitable open subset of $X'''$ containing $X$ and all the centers of the $v_{ij}$. We proved that $f$ as in \eqref{eq:factorization} is tame, if it is tamely ramified with respect to  $(X,X_N)$. The converse is true by definition.
\end{proof}
\providecommand{\bysame}{\leavevmode\hbox to3em{\hrulefill}\thinspace}
\providecommand{\MR}{\relax\ifhmode\unskip\space\fi MR }
% \MRhref is called by the amsart/book/proc definition of \MR.
\providecommand{\MRhref}[2]{%
  \href{http://www.ams.org/mathscinet-getitem?mr=#1}{#2}
}
\providecommand{\href}[2]{#2}

\end{document}